\documentclass{article}

\usepackage[preprint]{neurips_2026}

\usepackage[utf8]{inputenc} 
\usepackage[T1]{fontenc}    
\usepackage{hyperref}       
\usepackage{url}            
\usepackage{booktabs}       
\usepackage{amsfonts}       
\usepackage{nicefrac}       
\usepackage{microtype}      
\usepackage{xcolor}  

\usepackage{amsmath,amsfonts,amssymb}
\usepackage{mathtools}
\usepackage{graphicx}
\usepackage{algorithm}
\usepackage{algpseudocode}
\usepackage{cleveref}
\usepackage{xcolor}
\usepackage{soul}
\usepackage{cancel}
\newcommand{\floor}[1]{\left\lfloor #1 \right\rfloor}
\newcommand{\ceil}[1]{\left\lceil #1 \right\rceil}
\newtheorem{theorem}{Theorem}[section]

\newtheorem{corollary}[theorem]{Corollary}

\newtheorem{lemma}[theorem]{Lemma}

\newcommand\numberthis{\addtocounter{equation}{1}\tag{\theequation}}
\usepackage{mathrsfs}

\title{Sharp Optimal Algorithm for Derivative-Free Stochastic Convex Optimization in One Dimension}

\usepackage[affil-it]{authblk}

\author[1]{Alexandra Carpentier}
\author[1]{Chloé Rouyer}
\author[2]{Alexandre Tsybakov}
\author[3]{Arya Akhavan}

\affil[1]{Institut für Mathematik, Universität Potsdam, Germany}
\affil[2]{CREST, ENSAE, IP Paris, France} 
\affil[3]{Department of Statistics, University of Oxford, United Kingdom \vspace{0.3cm}}
\affil[ ]{  \hspace{-0.7cm}\footnotesize\ttfamily \textup{\{carpentier,rouyer\}@uni-potsdam.de \quad
alexandre.tsybakov@ensae.fr \quad
arya.akhavan@stats.ox.ac.uk}}

\begin{document}

\maketitle

\begin{abstract}


Stochastic convex optimization is a classical problem with well-understood guarantees under first-order feedback. In contrast, for zero-order optimization with noisy function evaluations, a logarithmic gap has persisted between known upper bounds and the $\Omega(1/\sqrt{T})$ lower bound, even in the one-dimensional case. In this work, we study the problem of minimizing a convex function $f : [0,1] \to [0,1]$ using a zero-order oracle with subGaussian noise. We propose a computationally efficient algorithm that achieves the optimal $O(1/\sqrt{T})$ convergence rate, matching the lower bound.
The result closes the existing gap in one dimension, providing the first sharp rate guarantee in this setting.

\end{abstract}

\section{Introduction}

We study stochastic convex optimization in a setting where only noisy function evaluations are available. At each round $t$, the learner selects a query point $x_t$ in the interval $[0,1]$ and observes $f(x_t) + \xi_t$, where the function evaluation $f(x_t)$ is corrupted by subGaussian noise $\xi_t$. Such zeroth-order feedback arises naturally in applications where gradients are unavailable or expensive to compute. The goal is to identify a point $\hat x$ with near-minimal function value using a limited number $T$ of queries. The performance is evaluated in terms of simple regret, which measures the distance between the function evaluated in $\hat x$ and the true function minimum: $r_T = f(\hat x) - \min_{x \in [0, 1]} f(x)$.
Despite the apparent simplicity of this setting—particularly in one dimension—the optimal statistical rate of convergence for zeroth-order optimization is not fully resolved.

 Information-theoretic limits for this setting are well studied. Minimax lower bound for the rate of decay of the simple regret that scales as $1/\sqrt{T}$ is proved in \cite{polyak1990optimal}. Namely, it follows from \cite{polyak1990optimal} that there exist constants $c_1>0$ and $c_2\in (0,1)$ such that
\begin{align}\label{intro1}
  \inf_{\hat{x}} \sup_{f\in \mathcal{F}} \mathbb{P}_f\Big(f(\hat{x}) - f^* \ge \frac{c_1}{\sqrt{T}} \Big)\ge c_2,  
\end{align}
where $\mathbb{P}_f$ is the probability measure, with which is distributed $(x_t,y_t)_{t=1}^T$, and the infimum is taken over all, possibly randomized, estimators $\hat{x}$. In fact, \cite{polyak1990optimal} established \eqref{intro1} in a stronger form, for $\mathcal{F}$ being the class of strongly convex and smooth functions in any dimension $d$. Later work \cite{shamir2013complexity,akhavan2020exploiting,akhavan2024gradient} derived lower bounds with explicit dependence on the dimension that scale as $d/\sqrt{T}$, in a weaker form than in \eqref{intro1} (for the expected simple regret  rather than for the probability).
 Papers \cite{jamieson2012}, \cite{duchi2015optimal} provided lower bounds with the rate $\sqrt{d/T}$, again for the expected regret.

While the lower bounds are obtained without any logarithmic factors, it is not the case for the available upper bounds. Thus, it remains unknown what is the exact optimal rate for $r_T$ in the setting described above. The stream of work on the upper bounds was mainly focused on improving the dependency on the dimension \cite{agarwal-rakhlin2011,lattimore-gyorgy21a,lattimore-gyorgy23a,fokkema2024online,carpentier2025simple}, starting from the $d^{16}{\rm polylog}(T,d)/\sqrt{T}$ rate in \cite{agarwal-rakhlin2011} and going down to $d^{1.5}{\rm polylog}(T,d)/\sqrt{T}$ in \cite{fokkema2024online}. The ${\rm polylog}(T,d)$ factor was not the main issue of this line of work and was not always explicitly stated. 
Some of these results are obtained for the cumulative regret, and it remains an open question to what extent the lower bound cited above (proved for the simple regret) is accurate in this case. In the one-dimensional setting that we are dealing with here, explicit upper bounds for cumulative regret are provided in \cite{agarwal-rakhlin2011} and \cite{lattimore-gyorgy21a}. Both propose computationally efficient algorithms. In one dimension, the simple regret of the method proposed in \cite{lattimore-gyorgy21a} scales as $(\log T)^{2}/\sqrt{T}$. Under the additional assumption that $f$ is Lipschitz, \cite{agarwal-rakhlin2011} proves that the rate $(\log T)^{3/2}/\sqrt{T}$ of simple regret can be achieved in one-dimensional case.  

A related line of work deals with the adversarial bandit setting \cite{Bubeck15a,bubeck2018,lattimore2020bandit,bubeck2021kernel}. These results are not directly comparable with our setting since the sum of function evaluation and random noise $\xi_t$ cannot be considered as an evaluation of a convex function.
In these works, the cumulative regret is shown to be of the order  ${\rm poly}(d){\rm polylog}(T,d)\sqrt{T}$. In particular, for dimension $d=1$ \cite{Bubeck15a} obtains the rate $\log(T)\sqrt{T}$ using a non-constructive approach via Bayesian minimax duality. 


In the present paper, we propose a novel algorithm for zeroth-order stochastic convex optimization in one dimension. 
We analyze the simple regret and derive both high-probability and in expectation guarantees, which both achieve the optimal rate $1/\sqrt T$,  closing the long-standing gap with the lower bound of \cite{polyak1990optimal}. 
The high-probability result is presented in Theorem \ref{thm:main}. 
We show that the proposed algorithm achieves $O\big(\sqrt{\frac{\log(1/\delta)}{T}} (\log\log(1/\delta))^2\big)$ simple regret guarantee with probability at least $1-\delta$.
The expected simple regret is analyzed in Corollary \ref{cor:expected_sr} and we derive a $O\big(\frac{1}{\sqrt{T}}\big)$ upper bound. 
These results rely on the Splitting Algorithm, which is introduced in \Cref{sec:splitting_algorithm}. It utilizes a geometric grid to ensure that the range of the function shrinks sufficiently at each step, effectively by-passing the logarithmic dependencies typically suffered when working with more uniform grids.


\section{Problem Setting}

We consider the problem of minimizing a convex function \( f: [0,1] \rightarrow [0,1] \). Denote by $\mathcal{F}$ the class of all such convex functions. We set
\[ f^* = \min_{x \in [0,1]} f(x). \]
Assume that for \( t = 1, \ldots, T \):
\begin{itemize}
    \item The learner chooses a point \( x_t \in [0,1] \).
    \item The learner observes \( y_t = f(x_t) + \xi_t \), where $\mathcal{\xi}_t$'s are independent random noise variables. 
\end{itemize}
For each $t\ge 2$, the chosen point $x_t$ is allowed to depend on the past observations $(x_i,y_i)_{i=1}^{t-1}$. We assume in what follows that for each \( t = 1, \ldots, T \) the noise $\xi_t$ is independent of $x_t,(x_i,y_i)_{i=1}^{t-1}$ and $1$-subGaussian, that is,
\[
\forall \, v\in \mathbf{R}, \, t = 1, \ldots, T: \quad \mathbb{E} \exp(v\xi_t) \le \exp(v^2/2).
\]
After \( T \) queries, the learner outputs an estimator  \( \hat{x} \in [0,1] \) measurable with respect to $(x_t,y_t)_{t=1}^T$. The aim of the learner is to construct $\hat x$ such that, on an event of probability \( 1 - \delta \), where $\delta\in (0,1)$, the simple regret
$ r_T = f(\hat{x}) - f^* $
is as small as possible.


\section{Splitting Algorithm}
\label{sec:splitting_algorithm}
To tackle this problem, we propose a subroutine (the Splitting Algorithm, \ref{alg:splitting}) that reduces the size of the searched interval and that will be repeatedly called by a meta-algorithm. When reducing the interval $[I_-, I_+]$ to its sub-interval $[I_-', I_+']$, the Splitting Algorithm also achieves a significantly smaller range of function $f$ on $[I_-', I_+']$ while keeping small the distance to the global function minimum on~$[0, 1]$.

Assume that we are given an interval $[I_-, I_+]$ such that the following holds.

\vspace{2mm}

{\bf{Assumption}$(\epsilon,\Delta,[I_-, I_+])$.}\label{assump:bound}
{\it 
Interval \([I_-, I_+] \subseteq [0,1]\) satisfies the conditions
\[ \min_{x\in [I_-, I_+] } f(x) - f^* \leq \epsilon 
\quad \text{and} \quad 
 \max_{x\in [I_-, I_+] } f(x) - \min_{x\in [I_-, I_+] } f(x)  \leq \Delta, \]
for some \(\epsilon \ge 0, \Delta>0\).}

\vspace{2mm}

In particular, the interval $[0,1]$ satisfies this assumption with $\epsilon=0$, $\Delta=1$. In what follows, we will often set without loss of generality \([I_-,I_+] = [0,1]\).   
The reduction is obtained by rescaling from $f(x)$ to $f(I_- + x(I_+ - I_-))$. 

Define the set of points in $[0,1]$:
\[
\mathcal{G} = \left\{ \frac{1}{2^i}, 1-\frac{1}{2^i}, \forall i \in \{0,\ldots,g\} \right\},
\]
where 
\begin{equation}
    g = \bar g(\Delta) = \lfloor \log_2(T \Delta^2)\rfloor +1. \label{def:g}
\end{equation}
Here, $\lfloor u \rfloor$ denotes the maximal integer less than $u$.
The cardinality of \(\mathcal{G}\) is
$
|\mathcal{G}| = 2g + 1.
$
For \(\delta > 0, c_{\mathrm{sampl}} > 0\), and $\bar g(\Delta)$ defined in \eqref{def:g}, set

\begin{equation}
N = \bar{N}(\Delta,\delta) = \floor{{2}c_{\mathrm{sampl}}^{-2}\frac{\log\Big({2}(2\bar g(\Delta)+1)/\delta\Big) (2\bar g(\Delta)+1)^2}{\Delta^2}} +1.
\label{def:N}
\end{equation}

\begin{algorithm}[]
\caption{Splitting Algorithm}
\label{alg:splitting}
\begin{algorithmic}[1]
\State {\bf input:} An interval \([I_-, I_+]\), a constant \(\Delta > 0\) corresponding to the one in Assumption$(\epsilon,\Delta)$,
a target probability $\delta >0$, constants $c_{\mathrm{sampl}}, c_{\mathrm{cond}} >0.$
\State {\bf initialization:} Rescale $f$ such that $[I_-, I_+] = [0, 1]$ using 
\[
{x \gets \frac{x-I_-}{I_+ - I_-}.}
\]
\ForAll {\(x \in \mathcal{G}\)}
    \State Sample \(x\) a total of \(N\) times
    \State Compute \(\hat{f}_x\) as the average of sampled observations
\EndFor

\State \textbf{define}:
{$$\mathcal{G}_- = \left\{ x \in \mathcal{G} : x<1/2, \, \tau_x > c_{\mathrm{cond}}\frac{\Delta}{2g+1}
\right\},$$
where $\tau_x=\hat{f}_{x} - \hat{f}_{2x}$ if $x\ne 0$ and $\tau_x=\hat{f}_{0} - \hat{f}_{2^{-g}}$ if $x= 0$.
}
\State \textbf{define}:
{$$\mathcal{G}_+ = \left\{ x \in \mathcal{G} : x> 1/2, \, \tau_x' > c_{\mathrm{cond}}\frac{\Delta}{2g+1}
\right\},$$
where $\tau_x'=\hat{f}_{x} - \hat{f}_{2x - 1}$ if $x\ne 1$ and $\tau_x'=\hat{f}_{1} - \hat{f}_{1-2^{-g}}$ if $x= 1$.
}

\State \textbf{define}: 
{
$$I'_- = 
\begin{cases} 
0, & \text{if } \mathcal{G}_- = \emptyset, \\
2^{-g}, & \text{if } \max \mathcal{G}_- = 0, \\
\max \mathcal{G}_-, & \text{otherwise},
\end{cases}
\qquad
I'_+ = 
\begin{cases} 
1, & \text{if } \mathcal{G}_+ = \emptyset, \\
1-2^{-g}, & \text{if } \min \mathcal{G}_+ = 1, \\
\min \mathcal{G}_+, & \text{otherwise}.
\end{cases}
$$
}


\State \Return \([I'_- , I'_+]\)
\end{algorithmic}
\end{algorithm}

Consider the procedure presented in \Cref{alg:splitting}. We call it the Splitting Algorithm. The idea of the algorithm is to estimate $f$ on the grid $\mathcal{G}$, then extract points $x$ from $\mathcal{G}$ with large enough gap between the estimator at $x$ and at its closest neighbor in $\mathcal{G}$ and define the new interval \([I'_- , I'_+]\) as the interval between the minimal and maximal elements of the extracted set. The following lemma describes a shrinkage property of this algorithm.

\begin{lemma}\label{lem:splitting}
    Assume that $\delta\in (0,1/2)$, $c_{\mathrm{sampl}} \leq c_{\mathrm{cond}}/4$, $c_{\mathrm{cond}} \leq 1/3$,  and $g\ge 2$. Let the input interval \([I_-,I_+]\) of \Cref{alg:splitting} satisfy Assumption$(\epsilon,\Delta,[I_-, I_+])$
    with some 
    \(\Delta>0, \epsilon \ge 0\). 
    Then with probability at least \(1-\delta\), \Cref{alg:splitting} outputs an interval \([I_-',I_+']\)  satisfying Assumption$(\epsilon',\Delta',[I_-', I_+'])$
    with 
    \[\Delta' = \left[\frac{3}{4} \lor \left(1 - \frac{c_{\mathrm{cond}}}{2(2g+1)}\right)\right] \Delta, \qquad \text{and} \qquad \epsilon' = \epsilon + 2^{-g+2} \Delta. \]
\end{lemma}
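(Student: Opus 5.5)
We work after rescaling, so $[I_-,I_+]=[0,1]$. Write $m=\min_{[0,1]}f$ and $M=\max_{[0,1]}f$, so that $M-m\le\Delta$ and $m-f^*\le\epsilon$.

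\textbf{Concentration event.} First we fix the good event
\[
E=\Bigl\{\forall x\in\mathcal G:\ |\hat f_x-f(x)|\le \tfrac{c_{\mathrm{sampl}}\Delta}{2g+1}\Bigr\}.
\]
Each $\hat f_x$ is an average of $N$ independent $1$-subGaussian noises. Hoeffding's bound and a union bound over the $2g+1$ points of $\mathcal G$ then give $\mathbb P(E^c)\le 2(2g+1)\exp\bigl(-N c_{\mathrm{sampl}}^2\Delta^2/(2(2g+1)^2)\bigr)\le\delta$ by the choice \eqref{def:N}. On $E$, each difference $\tau_x$ (or $\tau'_x$) is within $2c_{\mathrm{sampl}}\Delta/(2g+1)\le \tfrac{c_{\mathrm{cond}}}{2}\tfrac{\Delta}{2g+1}$ of its true counterpart $f(x)-f(2x)$, using $c_{\mathrm{sampl}}\le c_{\mathrm{cond}}/4$. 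Two consequences follow:
\begin{itemize}
\item[(i)] if $x\in\mathcal G_-$, then $f(x)-f(2x)>\tfrac{c_{\mathrm{cond}}}{2}\tfrac{\Delta}{2g+1}>0$;
\item[(ii)] if $x\notin\mathcal G_-$, then $f(x)-f(2x)\le\tfrac{3c_{\mathrm{cond}}}{2}\tfrac{\Delta}{2g+1}$.
\end{itemize}
The same holds on the right side. The rest of the argument is deterministic on $E$.

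\textbf{Bound on $\epsilon'$.} By (i) and convexity, $f$ strictly decreases from $x$ to $2x$ whenever $x\in\mathcal G_-$. Hence $f$ is decreasing on $[0,x]$, so no minimizer of $f$ on $[0,1]$ lies left of $I'_-$ (when $I'_-=\max\mathcal G_-$). Symmetrically, no minimizer lies right of $I'_+$. So $[I'_-,I'_+]$ contains the minimizer $x^*$ of $f$ on $[0,1]$, giving $\min_{[I'_-,I'_+]}f=m$ and $\epsilon'=\epsilon$.

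The exceptional cases $\max\mathcal G_-=0$ or $\min\mathcal G_+=1$ cut off $[0,2^{-g}]$ (resp.\ $[1-2^{-g},1]$), which may contain $x^*$. In that case we bound $f(2^{-g})-m$ by convexity. On the segment $[x^*,1]$, $f$ rises by at most $\Delta$ over length at most $1$, so $f(2^{-g})-f(x^*)\le 2^{-g}\Delta/(1-2^{-g})\le 2^{-g+1}\Delta$. Accounting for both ends, and possibly a doubling for the rescaled minimum, gives at most $2^{-g+2}\Delta$, hence $\epsilon'\le\epsilon+2^{-g+2}\Delta$.

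\textbf{Bound on $\Delta'$ (the main step).} We must show $\max_{[I'_-,I'_+]}f-m\le\Delta'$. By convexity the maximum on $[I'_-,I'_+]$ is attained at an endpoint, say $f(I'_-)$; the other endpoint is symmetric. The plan is a case split on the location of $I'_-$, using (ii) for all grid points $x\in(I'_-,1/2)$, which lie outside $\mathcal G_-$. Consider the dyadic chain $I'_-,2I'_-,4I'_-,\dots$ up to $1/2$. Along this chain, each step decreases $f$ by at most $\tfrac{3c_{\mathrm{cond}}}{2}\tfrac{\Delta}{2g+1}$ beyond the first step.

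\emph{Case A: $x^*\ge 1/2$.} Convexity on the chain, with the gaps nonincreasing in the appropriate direction, shows that $f(I'_-)-f(1/2)$ is at most about $2\cdot\tfrac{3c_{\mathrm{cond}}}{2}\tfrac{\Delta}{2g+1}\cdot$(number of steps). More cleanly, we use that the slope is increasing: if $f(2y)-f(y)$ is small for the last step, then $f(1)-f(1/2)$ controls things. We then compare with $M-m\le\Delta$: because the next point $y=I'_-/\ldots$ is not in $\mathcal G_-$, a drop of at least $\tfrac{c_{\mathrm{cond}}}{2(2g+1)}\Delta$ must be lost somewhere, yielding $f(I'_-)-m\le(1-\tfrac{c_{\mathrm{cond}}}{2(2g+1)})\Delta$.

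\emph{Case B: $x^*<1/2$.} Here the point $I'_-$ satisfies $f(I'_-)>f(2I'_-)$. Convexity then makes the values on $[I'_-,1/2]$ comparable, and halving the interval yields the $\tfrac34\Delta$ bound: $f$ at a point at distance at least $1/4$ from the far endpoint is at most a $3/4$-convex combination.

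I expect this case analysis to be the main obstacle, namely turning the ``no large gap'' condition (ii) together with convexity into the multiplicative contraction $\max\{3/4,\,1-c_{\mathrm{cond}}/(2(2g+1))\}$. The constraint $c_{\mathrm{cond}}\le1/3$ is what makes the accumulated per-step slack fit under this budget. My first attempt will be to write $f(I'_-)-m$ as a telescoping sum of gaps along the dyadic chain from $I'_-$ to the minimizer, and to show that the last gap (adjacent to $I'_-$, in $\mathcal G_-$) contributes at most $\Delta-\tfrac{c_{\mathrm{cond}}\Delta}{2(2g+1)}$ unless the function is concentrated near an edge, which is where the $3/4$ alternative enters.
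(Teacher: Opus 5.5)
\textbf{There is a genuine gap: the $\Delta'$ contraction, which is the heart of the lemma, is never proved.} Your concentration step and your $\epsilon'$ bound are sound. The latter is essentially the paper's argument; your comparison with the secant slope on $[2^{-g},1]$ even gives $2^{-g+1}\Delta$. Since $x^*$ lies in at most one cut-off end, no ``accounting for both ends'' is needed. For $\Delta'$ you leave only a plan, and the mechanisms you sketch are not the right ones.

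The case split that works is on which definition of $I'_-$ applies, not on where $x^*$ lies.

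\emph{Case $I'_-=\max\mathcal G_->0$.} The missing idea is to push the \emph{detected} gap to the left endpoint, using convexity on the equal-length intervals $[0,I'_-]$ and $[I'_-,2I'_-]$: $f(0)-f(I'_-)\ge f(I'_-)-f(2I'_-)>\frac{c_{\mathrm{cond}}\Delta}{2(2g+1)}$. Hence $f(I'_-)-m\le (f(0)-m)-\frac{c_{\mathrm{cond}}\Delta}{2(2g+1)}\le\bigl(1-\frac{c_{\mathrm{cond}}}{2(2g+1)}\bigr)\Delta$. Your Case A instead attributes the lost drop to a neighbouring point \emph{not} being in $\mathcal G_-$. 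Your closing plan tries to bound the gap at $I'_-$ from above, whereas it is the largeness of that gap that yields the contraction.

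\emph{Case $I'_-\in\{0,2^{-g}\}$.} Every grid step from $I'_-$ up to $1/2$ is undetected. So (ii) and telescoping over at most $g$ steps give $f(I'_-)-f(1/2)\le\frac{3c_{\mathrm{cond}}g\Delta}{2(2g+1)}\le\frac{3c_{\mathrm{cond}}\Delta}{4}\le\frac{\Delta}{4}$. This must be combined with the midpoint bound $f(1/2)-m\le\Delta/2$, which you never state, to reach $3\Delta/4$. That bound follows from convexity, e.g.\ $f(1/2)-m\le f(1)-f(1/2)$ when $x^*<1/2$.

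Your Case B cannot be repaired as written. Its premise $f(I'_-)>f(2I'_-)$ holds only when $I'_-\in\mathcal G_-$. Moreover, no ``$3/4$-convex combination'' argument from geometry alone can give $f(I'_-)-m\le\tfrac34\Delta$ when $I'_-\in\{0,2^{-g}\}$: a convex function of range $\Delta$ may have $f(0)-m=\Delta$, e.g.\ $f(x)=\Delta(1-x)$. There the $3/4$ comes solely from the small-gap information (ii) together with the midpoint bound. Note also that $c_{\mathrm{cond}}\le 1/3$ and $g\ge 2$ give $1-\frac{c_{\mathrm{cond}}}{2(2g+1)}\ge\frac{29}{30}>\frac34$. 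So $\Delta'=\bigl(1-\frac{c_{\mathrm{cond}}}{2(2g+1)}\bigr)\Delta$, and both cases fit under it.

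With these two bounds on $f(I'_-)-m$ and their mirror images for $I'_+$, your endpoint reduction finishes the proof as in the paper. The reduction is valid: the maximum of $f$ on $[I'_-,I'_+]$ is attained at an endpoint, and $\min_{[I'_-,I'_+]}f\ge m$.
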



\section{Meta-algorithm}

We now define a meta-algorithm that acts by applying the Splitting Algorithm in epochs numbered $r=1,2,\dots, R$.
For any \(\delta
\in(0,1)\) , $\Delta > 0$, introduce the notation
\[\psi(\Delta) = 2^{-\bar g(\Delta)+ 2}, ~~~\bar \delta(\Delta) =  \frac{\delta}{\Delta^2 T}.\] 
Set
\( [I_-^{(0)}, I_+^{(0)}] = [0,1],~~~\epsilon_0 = 0,~~~\Delta_0 = 1,~~~\delta_0 = \frac{\delta}{T} \)
and define, for any integer $r\ge 1$,
\[g_{r-1} = \bar g (\Delta_{r-1}) = \lfloor \log_2(T \Delta_{r-1}^2) \rfloor + 1, \qquad \delta_{r} =   \bar \delta( \Delta_r) =  \frac{\delta}{\Delta_r^2 T},\]
\begin{align}\label{def:Delta-r}
  \Delta_{r} &= \left[\frac{3}{4} \lor \left(1 - \frac{c_{\mathrm{cond}}}{2(2g_{r-1} + 1)}\right)\right] \Delta_{r-1},
  \end{align}
  with $c_{\mathrm{cond}}>0$ as in Lemma~\ref{lem:splitting}, and
  \begin{align*}
\epsilon_r &= \epsilon_{r-1} + 2^{-g_{r-1} + 2} \Delta_{r-1} =  \epsilon_{r-1} + \psi(\Delta_{r-1}) \Delta_{r-1}. 
\end{align*}

\begin{algorithm}
\caption{Meta-Algorithm}
\label{alg:meta}
\begin{algorithmic}[1]
\State {\bf input:} $\delta \in (0,1/3), \bar{C} >0$.
\State {\bf initialisation:} \(r=0\)
\While{\(\Delta_r \geq \bar \Delta:=\bar{C}\sqrt{\frac{\log(1/\delta)}{T}} {(\log\log(1/\delta))^2}\)}
    \State 
    Apply \textbf{Splitting Algorithm} with \([I_-^{(r)},I_+^{(r)}], \  \Delta_{r}, \ \delta_{r}\) and collect \([I_-^{(r +1)},I_+^{(r+1 )}]\)
\State \(r \gets r + 1\)
\EndWhile
\State {\bf write} $R$ for the last executed epoch, equivalently
$R=\max\{r\ge0:\Delta_r\ge\bar\Delta\}$
\State \Return any \(\hat{x}\in [I_-^{(R+1)},I_+^{(R+1)}]\)
\end{algorithmic}
\end{algorithm}

The meta-algorithm returns \(\hat{x}\) with the properties described in the next theorem. 
\begin{theorem}[High-probability simple regret]\label{thm:main}
Let $\delta\in (0,1/3)$, and { let $c_{\mathrm{sampl}}$, $c_{\mathrm{cond}}$ be as in Lemma~\ref{lem:splitting}.}  
Then there exists \(\bar{C}>0\) independent of $T,\delta$ 
such that Algorithm \ref{alg:meta} is such that its budget does not exceed \(T\) and such that it outputs \(\hat{x}\) satisfying with probability at least \(1-\delta\)
the inequality
\[ f(\hat{x}) - f^* 
\leq
\min\left(1,{2}\bar{C}
\sqrt{\frac{\log(1/\delta)}{T}} {(\log\log(1/\delta))^2}\right). \]
\end{theorem}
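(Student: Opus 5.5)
The plan is to apply Lemma~\ref{lem:splitting} inductively over the epochs, take a union bound over the failure events, and then control three quantities: the final range $\Delta_{R+1}$, the accumulated error $\epsilon_{R+1}$, and the total sampling budget.

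\textbf{Step 1: probability and the regret decomposition.} By induction on $r$, on the intersection of the good events of epochs $0,\dots,R$, the interval $[I_-^{(r)},I_+^{(r)}]$ satisfies Assumption$(\epsilon_r,\Delta_r,[I_-^{(r)},I_+^{(r)}])$. This is immediate from Lemma~\ref{lem:splitting}, whose output parameters match the recursions for $\Delta_r$ and $\epsilon_r$. The lemma needs $g_r\ge 2$, which holds because $\Delta_r\ge\bar\Delta$ gives $T\Delta_r^2\ge \bar C^2\log(1/\delta)\ge 2$ when $\bar C$ is large. The failure probability is at most $\sum_{r\le R}\delta_r=\frac{\delta}{T}\sum_r\Delta_r^{-2}$. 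Since $\Delta_r$ decreases geometrically in blocks, this sum is dominated by a constant multiple of $R\,\bar\Delta^{-2}$. Because $\bar\Delta^{-2}\le T/(\bar C^2\log(1/\delta))$ and $R$ is at most polylogarithmic, one must check that $R\bar\Delta^{-2}\lesssim T$, so that the total failure probability is at most $\delta$. If that check fails, I will bound the sum more carefully using the geometric decay of $\Delta_r$. On the good event,
\[ f(\hat x)-f^*\le \epsilon_{R+1}+\Delta_{R+1}. \]
Moreover $\Delta_{R+1}<\bar\Delta$ by the definition of $R$. The bound by $1$ is trivial because $f$ takes values in $[0,1]$.

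\textbf{Step 2: bounding $\epsilon_{R+1}$.} We have
\[ \epsilon_{R+1}=\sum_{r\le R}2^{-g_r+2}\Delta_r\le \sum_r \frac{4}{T\Delta_r^{2}}\,\Delta_r=\sum_r\frac{4}{T\Delta_r}. \]
Group the epochs by dyadic level, $\Delta_r\in(2^{-k-1},2^{-k}]$. Within level $k$, each epoch multiplies $\Delta$ by at most $1-c_{\mathrm{cond}}/(2(2g+1))$ with $g\approx \log_2(T4^{-k})$, so the number of epochs in the level is $O(\log(T4^{-k}))$. The level then contributes $O\big(2^{k}\log(T4^{-k})/T\big)$. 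This sum is dominated by the smallest $\Delta$, namely $\Delta\approx\bar\Delta$, and gives
\[ \epsilon_{R+1}\lesssim \frac{\log(T\bar\Delta^2)}{T\bar\Delta}. \]
Writing $T\bar\Delta^2=\bar C^2 L (\log L)^4$ with $L=\log(1/\delta)$, this equals $\bar\Delta\,\log(T\bar\Delta^2)/(T\bar\Delta^2)\le\bar\Delta$ once $\bar C$ is large. Combined with Step 1, the regret is at most $2\bar\Delta$, as claimed.

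\textbf{Step 3: the budget, which is the main obstacle.} Epoch $r$ uses
\[ (2g_r+1)N_r\approx \frac{(2g_r+1)^3\log\big((2g_r+1)/\delta_r\big)}{c_{\mathrm{sampl}}^2\Delta_r^2} \]
samples. Here $\log(1/\delta_r)=\log(1/\delta)+\log(T\Delta_r^2)$ and $g_r\approx\log(T\Delta_r^2)$. Summing over the $O(g)$ epochs in each dyadic level, level $k$ costs roughly $u^4\,(L+u)\,4^k$, where $u=\log(T4^{-k})$. Now set $m=\log_2(1/\bar\Delta)-k\ge 0$, so that $4^k=\bar\Delta^{-2}4^{-m}$ and $u\approx \log(\bar C^2L(\log L)^4)+2m$. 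The sum over $m$ converges geometrically, and the total is bounded by
\[ C\,\bar\Delta^{-2}\,\log(\bar C^2L)^4\,(L+\log\bar C). \]
This must be at most $T=\bar\Delta^{-2}\,T\bar\Delta^2=\bar\Delta^{-2}\bar C^2L(\log L)^4$. The power of $\log\log$ balances exactly: $(\log L)^4$ appears on both sides. Taking $\bar C$ large then absorbs the constants and the $\log\bar C$ terms, since $\bar C^2$ grows faster than $(\log \bar C)^5$. The delicate part is this exact bookkeeping. I would write $T\Delta_r^2=\bar C^2L(\log L)^4\,4^{m}$ explicitly and verify that the geometric series in $m$, weighted by polynomial factors in $m$, converges. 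I would also confirm that the $\log\log$ exponent (the square in $\bar\Delta$, hence the fourth power in $T\bar\Delta^2$) is exactly what the $g^3\cdot g$ per-level cost requires. The same computation also yields the bound on $R$ used in Step 1.
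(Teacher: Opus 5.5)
Your proposal is correct and follows essentially the paper's route. Both arguments group epochs by the size of $\Delta_r$ (the paper uses blocks $e^{-m-1}<\Delta_r\le e^{-m}$ and compares sums with integrals, while you use dyadic levels and a geometric series indexed from $\bar\Delta$), count $O(\log(T\Delta^2))$ epochs per block, and observe that the budget, $\sum_r\delta_r$ and $\epsilon_{R+1}$ are all dominated by the last block, as in \eqref{eq:proof:5}, \eqref{eq:proof:11} and the final regret chain.

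One correction: in Step 1 the crude check $R\bar\Delta^{-2}\lesssim T$ cannot hold. $R\asymp(\log T)^2$ grows with $T$, while $T\bar\Delta^2=\bar C^2\log(1/\delta)(\log\log(1/\delta))^4$ does not. So your fallback is mandatory, and it works: the Step 2 computation with $\Delta_r^{-2}$ in place of $\Delta_r^{-1}$ gives $\frac{\delta}{T}\sum_r\Delta_r^{-2}\lesssim\delta\log(T\bar\Delta^2)/(T\bar\Delta^2)\le\delta$.

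A minor point: the inequality $T\Delta_r^2\ge\bar C^2\log(1/\delta)$ must keep the factor $(\log\log(1/\delta))^4$, which is below $1$ when $\delta>e^{-e}$. Use $T\bar\Delta^2\ge\bar C^2\log 3\,(\log\log 3)^4$ instead.
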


The meta-algorithm also enjoys expected regret guarantees.
\begin{corollary}[Expected simple regret]
\label{cor:expected_sr}
Let the assumptions of Theorem \ref{thm:main} hold. Run
Algorithm~\ref{alg:meta} with \(\delta=e^{-e}\) and with \(\bar C\) chosen large enough as in Theorem~\ref{thm:main}. Then there exists a numerical constant $C_{\rm exp}>0$ such that, for every
$T\ge 1$, Algorithm~\ref{alg:meta} run with the budget not exceeding \(T\) satisfies
\[
    \sup_{f\in\mathcal F}
    \mathbb E\!\left[f(\widehat x)-f^*\right]
    \le \frac{C_{\rm exp}}{\sqrt T}.
\]
\end{corollary}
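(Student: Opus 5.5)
The plan is to deduce the expected regret bound from the high-probability guarantee of Theorem~\ref{thm:main} by integrating its tail. The delicate point is that Algorithm~\ref{alg:meta} is run with one fixed $\delta$, so the theorem gives us a single high-probability statement rather than a whole tail family. Fix $\delta_\star=e^{-e}$. Then $\log(1/\delta_\star)=e$ and $\log\log(1/\delta_\star)=1$, so Theorem~\ref{thm:main} says the following: with probability at least $1-\delta_\star$ the output satisfies $f(\hat x)-f^*\le 2\bar C\sqrt{e/T}$, and the budget never exceeds $T$. Since $f$ takes values in $[0,1]$, we always have $f(\hat x)-f^*\le 1$. The naive bound is therefore
\[
\mathbb E[f(\hat x)-f^*]\le 2\bar C\sqrt{e/T}+\delta_\star .
\]
This is not $O(1/\sqrt T)$, because $\delta_\star$ is a constant. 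So the bad event must be controlled more finely.

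The refinement is to exploit the internal structure of the proof of Theorem~\ref{thm:main}. In that proof, the failure probability is the union over epochs $r$ of the failure probabilities $\delta_r=\delta/(\Delta_r^2T)$ of Lemma~\ref{lem:splitting}. Let $A_r$ be the event that the first failure happens at epoch $r$. On $A_r$, the epochs $0,\dots,r-1$ succeed, so the current interval satisfies Assumption$(\epsilon_r,\Delta_r,[I_-^{(r)},I_+^{(r)}])$. After epoch $r$, the meta-algorithm only ever shrinks to subintervals, since $[I_-',I_+']\subseteq[I_-,I_+]$ by construction. Hence the final $\hat x$ lies in $[I_-^{(r)},I_+^{(r)}]$, and
\[
f(\hat x)-f^*\le \epsilon_r+\Delta_r\le \epsilon_R+\Delta_r .
\]
From the proof of the theorem, $\epsilon_R\lesssim \bar\Delta\lesssim 1/\sqrt T$. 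Using $\mathbb P(A_r)\le\delta_r=\delta_\star/(\Delta_r^2T)$, we get
\[
\mathbb E[f(\hat x)-f^*]\le C/\sqrt T+\sum_{r\le R}\frac{\delta_\star\Delta_r}{\Delta_r^2T}
= C/\sqrt T+\frac{\delta_\star}{T}\sum_{r\le R}\Delta_r^{-1}.
\]

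The main obstacle is bounding $\sum_{r\le R}\Delta_r^{-1}$ by $O(\sqrt T)$. The $\Delta_r$ decrease geometrically, with ratio at least $1-c_{\rm cond}/(2(2g_{r-1}+1))$. When $\Delta$ is near $1/\sqrt T$, $g$ is $O(1)$, so the ratio is bounded away from $1$ there. Grouping epochs into dyadic blocks $\Delta\in[2^{-k-1},2^{-k})$, each block contains $O(g)=O(\log(T4^{-k}))$ epochs. The sum is therefore
\[
\sum_k 2^{k}\log(T4^{-k})\lesssim \sqrt T ,
\]
since the terms with $2^k\approx\sqrt T/2^j$ contribute $\sqrt T\,2^{-j}j$, which is summable over $j$. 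This gives a total of $O(\sqrt T/T)=O(1/\sqrt T)$.

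The term $\epsilon_R$ and the budget constraint are both inherited from Theorem~\ref{thm:main}. Uniformity over $f\in\mathcal F$ holds because every constant is numerical. For small $T$, where the while loop may not run at all, the trivial bound $1\le C_{\rm exp}/\sqrt T$ suffices after enlarging $C_{\rm exp}$.
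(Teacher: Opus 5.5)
Your proposal is correct and follows essentially the same route as the paper. You decompose according to the first epoch at which the event of Lemma~\ref{lem:splitting} fails, use nestedness of the intervals to bound the regret on that event by $\epsilon_r+\Delta_r$ with probability at most $\delta_r$, and control $\sum_{r\le R}\delta_r\Delta_r=\delta_\star\sum_{r\le R}1/(T\Delta_r)=O(1/\sqrt T)$ by block counting. The only deviation is harmless: you absorb the $\epsilon_r$ contributions through $\epsilon_r\le\epsilon_R=O(1/\sqrt T)$ and disjointness of the first-failure events, which avoids the paper's extra bound on $S_2=\sum_{r\le R}1/(T\Delta_r^2)$. You leave implicit the conditioning argument behind $\mathbb P(A_r)\le\delta_r$ and the check that $g_r\ge 2$ and $\delta_r<1/2$ at every epoch, but both are routine.
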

The proof of Corollary \ref{cor:expected_sr} is deferred to Appendix \ref{appen:expected_sr}.


\section{Proof of Theorem~\ref{thm:main}}
To prove Theorem~\ref{thm:main} we derive separately the bound on the budget in Section \ref{sec:bound_budget} and the bound on the simple regret in Section \ref{sec:bound_error}.

\subsection{Bound on the budget}
\label{sec:bound_budget}
It suffices to consider the case $T> {\bar C}^2 \log(1/\delta)(\log\log(1/\delta))^4$ since otherwise the algorithm stops at the initialization and the bound of the theorem is trivial. This condition will be assumed throughout the proof. Note also that, since \(\Delta_r \geq \bar \Delta\) for $r \in\{0,\dots, R\}$, we have 
\begin{align}\label{eq:glog}
    T  \Delta_{r}^2 \geq 2,
\end{align}
provided that $\bar C\ge \sqrt{2} (\log(3))^{-1/2} (\log\log(3))^{-2}$ (we will assume this condition on $\bar C$ in the sequel).

As \Cref{alg:meta} runs, the range $\Delta_r$ decreases until epoch $R$, the last epoch for which
$\Delta_R\ge\bar\Delta$. After executing epoch \(R\), the algorithm obtains
\(\Delta_{R+1}<\bar\Delta\).
We show that round $R$ is reached using at most $T$ queries if $\bar C>0$ is chosen large enough.

At each round $r\in\{0,1,\dots, R\}$ of \Cref{alg:meta}, the budget spent by calling the Splitting Algorithm (\Cref{alg:splitting}) with the parameters $\Delta_r$ and $\delta_r=\bar\delta(\Delta_r)$ is equal to
\begin{equation}\label{eq:proof:0}
  \bar N(\Delta_r, \bar \delta(\Delta_r)) (2\bar g(\Delta_r)+1)
\le {4}c_{\mathrm{sampl}}^{-2}\frac{\log\Big(2(2 g_r+1)/\delta_r\Big) (2 g_r+1)^3}{\Delta_r^2}
, 
\end{equation} 
where $\bar N$  is the number of queries made at each point of the grid, $2\bar g(\Delta_r)+1= 2g_r+1$ is the number of grid points at round~$r$, and $\delta_r$ is the tolerance error 
at round~$r$. 

Note that if $c_{\mathrm{cond}}\le 1$ then, for any integer $r \in\{1,\dots, R\}$,
\begin{align}\label{eq:proof:1}
  \Delta_{r} = \left(1 - \frac{c_{\mathrm{cond}}}{2(2g_{r-1} + 1)}\right) \Delta_{r-1} \le 
\left(1 - \frac{c_{\mathrm{cond}}}{10 (\log_2(T \Delta_{r-1}^2) )}\right) \Delta_{r-1}.   
\end{align} 
 Indeed, using \eqref{eq:glog} we get
 \( g_{r-1} = \lfloor \log_2(T  \Delta_{r-1}^2) \rfloor + 1 \ge 1   \) and $2(2g_{r-1} + 1)\ge 6$,
which together with the definition \eqref{def:Delta-r} proves the equality in \eqref{eq:proof:1}. To prove the inequality in  \eqref{eq:proof:1}, it suffices to note that 
%
$2(2g_{r-1} + 1) = 4 \lfloor \log_2(T  \Delta_{r-1}^2) \rfloor + 6 \leq 10 \log_2(T  \Delta_{r-1}^2)$. 

Using \eqref{eq:proof:1} and the fact that $(1-1/u)^u \leq e^{-1}$, $\forall u \ge 1$, we obtain:
\begin{equation}\label{eq:proof:2}
 \Delta_{r + r'} \leq  e^{-1}\Delta_{r},   
\end{equation}  
for \( r' \ge \bar{R}(\Delta_r) = \frac{10 \log_2(T \Delta_{r}^2) }{c_{\mathrm{cond}}} \) and $r\in\{0,1,\dots, R\}$.

Now, for integers $m\ge 0$ consider the blocks $\{r \ge 0:  e^{-m-1} < \Delta_r \leq e^{-m}\}$. The values $\Delta_r$ remain of the same order of magnitude within each block. 
It follows from \eqref{eq:proof:2} that 
the cardinality of the $m$th block satisfies: 
$| \{r \ge 0:  e^{-m-1} < \Delta_r \leq e^{-m}\}| \leq \ceil{\bar R(e^{-m})} \leq 2\bar R(e^{-m})\leq \frac{20 \log_2(T e^{-2m})}{c_{\mathrm{cond}}}.$
 Introduce the notation $y_r=T\Delta_r^2$. Then we have 
\begin{equation}\label{eq:proof:3}
 | \{r \ge 0:  Te^{-2m-2} < y_r \leq T e^{-2m}\}|\leq 2\bar R(e^{-m}) \leq \frac{20 \log_2(T e^{-2m})}{c_{\mathrm{cond}}} .   
\end{equation}

Next, we evaluate the expression in \eqref{eq:proof:0}. Plugging in $\delta_r=\delta/y_r$ and noticing that $2g_r+1\le 2\log_2(y_r)+3\le 
5\log_2(y_r) \le 5y_r $ we get
\begin{align*}
  &  \frac{\log\Big(2(2 g_r+1)/\delta_r\Big) (2 g_r+1)^3}{\Delta_r^2} \le  T \frac{\log\Big(2(2\log_2(y_r)+3)/\delta_r\Big) (2\log_2(y_r)+3)^3}{y_r}
  \\
  & \qquad \qquad \qquad \qquad = T \frac{\log\Big(2y_r(2\log_2(y_r)+3)/\delta\Big) (2\log_2(y_r)+3)^3}{y_r}
  \\
   & \qquad \qquad \qquad \qquad \le C_1 T\frac{(\log(1/\delta) + \log_2(y_r))(\log_2(y_r))^3}{y_r},
\end{align*}
where $C_1>0$ is a numerical constant. Combining this bound with \eqref{eq:proof:0}  
we obtain
that the budget $T^*$ consumed before the algorithm stops satisfies
\begin{align*}
T^* &= \sum_{r = 0}^R \bar N(\Delta_r, \bar \delta(\Delta_r)) (2\bar g(\Delta_r)+1)\\ 
&\le 2C_1 c_{\mathrm{sampl}}^{-2} T \sum_{r \geq 0: \Delta_r \geq \bar \Delta} \frac{(\log(1/\delta) + \log_2(y_r))(\log_2(y_r))^3}{y_r}\\ 
&\leq 2C_1 c_{\mathrm{sampl}}^{-2} \sum_{m\geq 0: e^{-m} \geq \bar \Delta} \frac{[\log(1/\delta)+ \log_2(Te^{-2m})](\log_2(Te^{-2m}))^3}{e^{-2m-2}} \bar R(e^{-m}).
\end{align*}
Invoking \eqref{eq:proof:3} we find that, for a numerical constant $C_2>0$, 
\begin{align}\label{eq:proof:5} 
T^* 
&\leq  C_2\Bigg(\log(1/\delta) \sum_{\substack{ m \ge 0:\\ e^{-m} \geq \bar \Delta}}  \frac{(\log_2(Te^{-2m}))^4}{e^{-2m}}
+ \sum_{\substack{ m \ge 0:\\ e^{-m} \geq \bar \Delta}}  \frac{(\log_2(Te^{-2m}))^5}{e^{-2m}}\Bigg).
\end{align}
Using the definition of $\bar \Delta$, the fact that $T \ge \bar{C}^2 \log(3)(\log\log(3))^4$ and carrying out accurate evaluation of the sums in \eqref{eq:proof:5} (see Appendix \ref{appen:budget}) we obtain that $T^*\le T$ if $\bar{C}$ is large enough. 


\subsection{Bound on the regret}
\label{sec:bound_error}

For any executed epoch $r\in\{0,\dots,R\}$, we denote by $\mathcal{E}_r$ the corresponding event in Lemma~\ref{lem:splitting}, which holds with probability at least $1 - \delta_r$. We denote by $\mathcal{E}_*$ the intersection of all these events up to round $R$, and by $\mathcal{E}^c_*$ its complement. Recalling that $ 2\bar R(e^{-m})$ is an upper bound on the number of rounds $r$ such that $ e^{-m-1} < \Delta_r \leq e^{-m}$ 
we have
\begin{align*}
     \mathbb P(\mathcal{E}^c_*)  \leq 2\sum_{r=0}^R \delta_r &\leq  2\sum_{\substack{ m \ge 0:\\ e^{-m} \geq \bar \Delta}}  \bar R(e^{-m}) \bar \delta(e^{-m-1})
    \end{align*}
    since  $\bar \delta$ is a decreasing function. Using~\eqref{eq:proof:3}, the definition of $\bar \delta$, and arguing as in \eqref{eq:proof:6a} (see Appendix \ref{appen:budget}) we find:
\begin{align}\label{eq:proof:11}
 \mathbb P(\mathcal{E}^c_*) 
    \leq  \frac{20 e^2 \delta}{c_{\mathrm{cond}}}\sum_{\substack{ m \ge 0:\\ e^{-m} \geq \bar \Delta}}  
    \frac{\log_2(T e^{-2m})}{Te^{-2m}} 
    \le C_6\delta\left[\frac{\log_2(T)}{T} + \int_{e^{-2}\bar{C}^2 c_\delta}^{+\infty} \frac{\log_2 w}{w^2} dw
    \right],
\end{align}
where $C_6>0$ is a numerical constant.
Here, $c_\delta\ge \log(3)(\log\log(3))^4$. Recalling that  
$T \ge \bar{C}^2 \log(3)(\log\log(3))^4$ we obtain that the expression in the square brackets in~\eqref{eq:proof:11} is smaller than $1/C_6$ if 
$\bar C$ is chosen large enough. Thus, the bound $\mathbb P(\mathcal{E}^c_*)\le \delta$ holds true for $\bar C$ large enough.

\vspace{1mm}

On the event $\mathcal{E}_*$, we can apply  Lemma~\ref{lem:splitting} on each round $r$ of \Cref{alg:meta} until $r=R$, which yields that $\Delta_R \leq \bar \Delta$, and
\begin{align*}
    f(\hat x) - f^* &\leq \Delta_R + \epsilon_{R-1} \leq \bar \Delta + 2\sum_{m\geq 0: e^{-m} \geq \bar \Delta} \bar R(e^{-m}) \psi(e^{-m-1}) e^{-m}
    \end{align*}
    where we used the fact that $\bar R$ is an increasing function, cf.~\eqref{eq:proof:3}, and $\psi$ is a decreasing function, $\psi(\Delta) = 2^{-\bar g(\Delta)+ 2}=2^{- \lfloor \log_2(T\Delta^2)\rfloor+1  }$. Since 
    $\lfloor y\rfloor +1 \ge y$, $\forall y\ge 0$, we get that $\psi(e^{-m-1})\le 2^{- \log_2(Te^{-2m-2})+2 } = 4e^2/(Te^{-2m})$.     
    Using these remarks and~\eqref{eq:proof:3} we obtain that the following inequalities hold for $\bar C$ large enough on the event $\mathcal{E}_*$:  
    \begin{align*}
     f(\hat x) - f^* 
    &\leq \bar \Delta + C_7\sum_{m\ge0: e^{-m} \geq \bar \Delta}    \frac{\log_2(Te^{-2m})}{Te^{-m}} 
    \\
    &\le \bar \Delta + C_8\left[\frac{\log_2(T)}{T} + \frac{1}{\sqrt{T}}\int_{e^{-2}\bar{C}^2 c_\delta}^{+\infty} \frac{\log_2 w}{w^{3/2}} dw 
    \right] 
    \le 2\bar \Delta,
\end{align*}
where the penultimate inequality follows from the same argument as in \eqref{eq:proof:6a} (see Appendix \ref{appen:budget}), $C_7, C_8$~are positive numerical constants, and the final inequality holds under the choice of $\bar C$ large enough due to the fact that $T>  \bar{C}^2 \log(3)(\log\log(3))^4$ . $\square$


\section{Proof of Lemma~\ref{lem:splitting}}


We start by proving two auxiliary lemmas.

\begin{lemma}[Gap between \(f(1/2)\) and the minimum over interval]\label{lem:bound to 1/2}
Let $0\le I_- < 1/2 < I_+\le 1 $.
   Let $f:[I_-,I_+]\to [0,1]$ be a convex function, and consider $\tilde x\in {\rm argmin}_{x \in [I_-,I_+]} f(x).$ Set
   \[
\tilde \Delta := \max_{x \in [I_-,I_+]} f(x) - \min_{x \in [I_-,I_+]} f(x).
\]
Then
\begin{equation}\label{eq:gapdem}
    f(1/2) - f(\tilde x) \leq \frac{\tilde \Delta}{2}.
\end{equation}
\end{lemma}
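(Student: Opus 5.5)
The plan is a direct convexity argument: write $1/2$ as a convex combination of the minimizer $\tilde x$ and the endpoint on the far side of $1/2$, and bound the interpolation weight by $1/2$. By symmetry (replace $f(x)$ by $f(1-x)$, which maps $[I_-,I_+]$ to $[1-I_+,1-I_-]$ and fixes $1/2$), we may assume $\tilde x\le 1/2$. If $\tilde x=1/2$ the claim is trivial, so let $\tilde x<1/2<I_+$.

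\textbf{Step 1 (convex combination).} Set $\lambda=\frac{I_+-1/2}{I_+-\tilde x}\in(0,1)$, so that $1/2=\lambda\tilde x+(1-\lambda)I_+$. Convexity gives $f(1/2)\le \lambda f(\tilde x)+(1-\lambda)f(I_+)$, hence
\[
f(1/2)-f(\tilde x)\le (1-\lambda)\bigl(f(I_+)-f(\tilde x)\bigr)\le (1-\lambda)\tilde\Delta ,
\]
since $f(I_+)-f(\tilde x)\le \max f-\min f=\tilde\Delta$.

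\textbf{Step 2 (bounding the weight).} We have $1-\lambda=\frac{1/2-\tilde x}{I_+-\tilde x}$. This is at most $1/2$ if and only if $I_+\ge 1-\tilde x$. That holds when $1/2$ is the midpoint of the interval, $I_++I_-=1$: then $I_+=1-I_-\ge 1-\tilde x$ because $\tilde x\ge I_-$. Combining with Step 1 gives \eqref{eq:gapdem}.

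\textbf{Main obstacle.} Step 2 is the crux, and it genuinely needs the symmetry $I_-+I_+=1$. Without it the statement fails. For example, take $I_-=0$, $I_+=0.51$ and $f(x)=x\tilde\Delta/0.51$; then $f(1/2)-f(\tilde x)=\frac{0.5}{0.51}\tilde\Delta>\tilde\Delta/2$.

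In the intended use (Algorithm~\ref{alg:splitting} after rescaling), the interval is $[0,1]$, so $I_-=0$ and $I_+=1$ and the condition is automatic. I would therefore prove the lemma under the hypothesis $I_-+I_+=1$, most importantly for $[I_-,I_+]=[0,1]$. Equivalently, for a general interval one can replace $1/2$ by the midpoint $(I_-+I_+)/2$, and the same two steps go through verbatim.
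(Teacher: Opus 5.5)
Your proof is correct and follows essentially the paper's argument. The paper also reduces to $[I_-,I_+]=[0,1]$ with $\tilde x<1/2$ and applies convexity at the three points $\tilde x<1/2<1$, in the slope form $\frac{f(1/2)-f(\tilde x)}{1/2-\tilde x}\le \frac{f(1)-f(1/2)}{1/2}$; together with $1/2-\tilde x\le 1/2$ this gives $2\bigl(f(1/2)-f(\tilde x)\bigr)\le f(1)-f(\tilde x)\le\tilde\Delta$, which is your chord bound with weight at most $1/2$.

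Your counterexample is also valid. The paper's step ``without loss of generality $[I_-,I_+]=[0,1]$'' keeps the point $1/2$ fixed under rescaling, so it only proves your corrected version (symmetric interval, or $1/2$ replaced by the midpoint). That corrected version is exactly the case used in the proof of Lemma~\ref{lem:splitting}.
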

{\bf Proof.}
Without loss of generality, we assume that \([I_-,I_+] = [0,1]\) and that \(\tilde{x} < 1/2\).
The convexity of $f$ and the fact that $\tilde x \in [0,1/2)$ imply:
    \[
    \frac{f(1/2) - f(\tilde{x})}{1/2 - \tilde x} \leq \frac{f(1) - f(1/2)}{1/2}.
    \]
   Since $1/2 - \tilde x \leq 1/2$, we obtain $
    f(1/2) - f(\tilde{x}) \leq f(1) - f(1/2),
    $
    so that 
    $
    2(f(1/2) - f(\tilde{x}))\leq f(1) - f(\tilde{x}) \le \tilde\Delta.
    $
Thus, \eqref{eq:gapdem} follows. $\square$

\begin{lemma}\label{lem:concentration}
Let $c_{\mathrm{sampl}}>0$, $\delta\in (0,1)$. Consider the event 
$\mathcal{E} = \Big\{|\hat f_x - f(x)| \leq   \frac{c_{\mathrm{sampl}}\Delta}{2g+1}, \forall x\in \mathcal G\Big\}.$ 
    We have $\mathbb{P}(\mathcal{E})\ge 1-\delta.$
\end{lemma}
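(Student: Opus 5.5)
We bound $|\hat f_x - f(x)|$ for each fixed grid point with a subGaussian tail inequality and then take a union bound over the $2g+1$ points of $\mathcal G$. Fix $x\in\mathcal G$. The $N$ observations at $x$ are $y = f(x)+\xi$, so $\hat f_x - f(x)$ is the average of the $N$ noise variables attached to these queries. The query points in the Splitting Algorithm are fixed in advance: the grid $\mathcal G$ and the sample size $N$ depend only on $\Delta,\delta,T$, and not on the observations within the call. Since each $\xi_t$ is independent of the past and $1$-subGaussian, iterated conditioning gives
\[
\mathbb E\exp\Big(v\sum_{i=1}^N \xi_{t_i}\Big)\le \exp(Nv^2/2)
\]
for all $v$. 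Hence the average $\hat f_x - f(x)$ is $(1/N)$-subGaussian, and the Chernoff bound yields
\[
\mathbb P\big(|\hat f_x - f(x)|>s\big)\le 2\exp(-Ns^2/2).
\]
In the meta-algorithm, earlier epochs influence $\Delta$ and the interval. Within one call, however, these are fixed, so the argument applies conditionally on the past.

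Next we take $s = c_{\mathrm{sampl}}\Delta/(2g+1)$ and plug in the definition \eqref{def:N} of $N$. Since $N > 2c_{\mathrm{sampl}}^{-2}\log(2(2g+1)/\delta)(2g+1)^2/\Delta^2$, we get
\[
\frac{Ns^2}{2} > \log\big(2(2g+1)/\delta\big).
\]
So each point fails with probability at most $\delta/(2g+1)$. The factor $2$ inside the logarithm absorbs the two-sidedness of the tail bound.

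Finally, a union bound over the $|\mathcal G| = 2g+1$ grid points gives $\mathbb P(\mathcal E^c)\le\delta$. The only mildly delicate step is the justification of the martingale-type subGaussian bound for the sum when queries are interleaved adaptively. This is resolved by noting that the query points are deterministic given the input of the call, combined with the conditional independence assumption on $\xi_t$ from the problem setting.
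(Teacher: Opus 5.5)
Your proposal is correct and follows the paper's proof: a subGaussian tail bound for the average of the $N$ noise terms at each grid point, followed by a union bound over the $|\mathcal G| = 2g+1$ points, with $N$ chosen so that $2(2g+1)\exp\bigl(-\tfrac{N}{2}(c_{\mathrm{sampl}}\Delta/(2g+1))^2\bigr)\le\delta$. Your extra remarks on iterated conditioning and on conditioning on earlier epochs make explicit the independence that the paper simply asserts, and they are harmless.
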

{\bf Proof.} By the definition of the algorithm, for each $x\in \mathcal G$ we have $\hat f_x - f(x)=\frac{1}{N}\sum_{j=1}^N \varepsilon_{ix}$, where $\varepsilon_{ix}$ are independent $1$-subGaussian random variables.
Thus, each $\hat f_x - f(x)$ is a $1/\sqrt{N}$-subGaussian random variable, so that the union bound implies: 
\[
\mathbb P(\mathcal E^c)
=
\mathbb P\left(
\exists x\in\mathcal G:
|\hat f_x-f(x)|>
c_{\mathrm{sampl}}\frac{\Delta}{2g+1}
\right)
\le
2|\mathcal G|
\exp\left(
-\frac N2
\left(
\frac{c_{\mathrm{sampl}}\Delta}{2g+1}
\right)^2
\right).
\]
By the definition of $N$ in \eqref{def:N} and the fact that $|\mathcal G| = 2g+1$ we obtain that the right hand side of the above display does not exceed $\delta$. $\square$


\paragraph{Proof of Lemma~\ref{lem:splitting}.} 
Without loss of generality assume that \([I_-,I_+] = [0,1]\).
We place ourselves on the event $\mathcal{E}$ and consider separately the three possible cases. 


{\bf Case 1: $\mathcal G_-\ne \emptyset$ and $\max \mathcal G_->0$.}
In this case, we have $I_-' = \max \mathcal G_-$ and $I_-' \in (0,1/2)$. By the definition of $\mathcal G$, it follows that $2I_-' \in \mathcal G$ and $2I_-' \in (0,1/2]$. Also, due to the definition of $\mathcal G_-$, 
\[\hat f_{I_-'} - \hat f_{2I_-'} > c_{\mathrm{cond}}\frac{\Delta}{2g+1}.\]
On the event $\mathcal{E}$, this implies the bound
\[f(I_-') - f(2I_-') > c_{\mathrm{cond}}\frac{\Delta}{2g+1} - 2c_{\mathrm{sampl}}\frac{\Delta}{2g+1} \ge c_{\mathrm{cond}}\frac{\Delta}{2(2g+1)},\]
where we used the condition $c_{\mathrm{sampl}} \leq c_{\mathrm{cond}}/4$. We deduce that 
$\tilde x\ge  I_-'$ as $f$ is decreasing on the right of~$I_-'$.
Using the convexity of $f$ we obtain: 
\[f(0) - f(I_-')\ge f(I_-') - f(2I_-') > c_{\mathrm{cond}}\frac{ \Delta}{2(2g+1)}.\]
It follows that 
\begin{align*}
    f(I_-') - f(\tilde x)  =  f(I_-') - f(0) + f(0) - f (\tilde x) 
     \leq - \frac{c_{\mathrm{cond}} \Delta}{2(2g+1)} +  \tilde \Delta 
    \leq \left(1 - \frac{c_{\mathrm{cond}}}{2(2g+1)}\right) \Delta,
\end{align*}
where we used the bound $\tilde \Delta \leq \Delta$ granted by Assumption$(\epsilon,\Delta,[I_-, I_+])$. Invoking again the convexity of $f$ and using~\eqref{eq:gapdem} we find that, for any $y \in [I_-', 1/2]$,
\begin{align*}
f(y) - f(\tilde x) &\leq \frac12(f(I_-') - f(\tilde x)) + \frac12(f(1/2)-f(\tilde x))
\\
& \le \frac12\left[\left(1 - \frac{c_{\mathrm{cond}}}{2(2g+1)}\right) \Delta
+
\frac{\tilde \Delta}{2}\right]
\leq \left(1 - \frac{c_{\mathrm{cond}}}{2(2g+1)}\right) \Delta.
\end{align*}
\noindent
\underline{In summary,} the following facts hold on the event $\mathcal{E}$ in Case 1.\\
(i) For any $y \in [I_-', 1/2]$ we have:
\[f(y) - f(\tilde x) \leq \Big(1 - \frac{c_{\mathrm{cond}}}{2(2g+1)}\Big) \Delta.\]
 { (ii) If $\tilde x\le 1/2$ then
 $\min_{x\in [I_-',1/2]}f(x) - f(\tilde x) =0.$
 Indeed, recall that we also have \\ $I_-'\le \tilde x$, so that $\tilde x\in [I_-',1/2]$. Moreover, $f(\tilde x)=\underset{x\in [0,1]}{\min}f(x)$.}
 
\vspace{2mm}

{\bf Case 2: $\mathcal G_-\ne \emptyset$ and $\max \mathcal G_-=0$ .}

In this case 
$I_-' =2^{-g}$. Thus, by the definition 
of $\mathcal G_-$, 
\[\hat f_{0} - \hat f_{2^{-g}} > c_{\mathrm{cond}}\frac{\Delta}{2g+1},\]
while for any $x\in \mathcal G$ such that $I_-' \leq x<1/2$:
\[\hat f_{x} - \hat f_{2x} \leq c_{\mathrm{cond}}\frac{\Delta}{2g+1}.\]
Therefore, on the event $\mathcal{E}$ 
for any $x\in \mathcal G$ such that $I_-' \leq x<1/2$ we have:
\[f(x) - f(2x) \leq c_{\mathrm{cond}}\frac{\Delta}{2g+1} + 2c_{\mathrm{sampl}}\frac{\Delta}{2g+1} \leq  3c_{\mathrm{cond}}\frac{\Delta}{2(2g+1)},\]
since $c_{\mathrm{sampl}} \leq c_{\mathrm{cond}}/4$.
Hence, by the convexity of $f$ we obtain that,
for any $y \in [I_-',1/2]=[2^{-g},1/2]$, 
\begin{align}
 f(y) - f(1/2) &\leq  f(2^{-g}) - f(1/2) = \sum_{i=2}^g(f(2^{-i}) -f(2^{-i+1}))
 \nonumber \\
 &\le 3c_{\mathrm{cond}}\frac{\Delta (g-1)}{2(2g+1)} \le 3c_{\mathrm{cond}}\frac{\Delta}{4}.\label{eq:lem}
\end{align}
This inequality together with~\eqref{eq:gapdem} implies that, for any $y \in [I_-',1/2]$,
\begin{align}\label{eq:case2}
f(y) - f(\tilde x) \leq    3c_{\mathrm{cond}}\frac{\Delta}{4} + \frac{\tilde \Delta}{2} \leq \frac{3\Delta}{4},
\end{align}
since $c_{\mathrm{cond}} \leq 1/3$ and $\tilde \Delta \leq \Delta$ by Assumption$(\epsilon,\Delta,[I_-, I_+])$.

Our next aim is to prove a bound on $\min_{x\in [I_-',1/2]}f(x) - f(\tilde x)$ assuming that $\tilde x\le 1/2$. If $\tilde x\in [I_-',1/2]$ we obviously have $\min_{x\in [I_-',1/2]}f(x) - f(\tilde x)=0$. 
Assume that $\tilde x < I_-'=2^{-g}$. Then by convexity of $f$ and since the range of $f$ is bounded by $\Delta$ in $[I_-,I_+] = [0,1]$ the following holds:
\[f(I_-') - f(\tilde x) =f(2^{-g}) - f(\tilde x) \leq 2^{-g+2} [f(1/2) - f(1/4)] \leq 2^{-g+2} \Delta.\]

{ Therefore, for $\tilde x < I_-'$ we have
$
 \min_{x\in [I_-',1/2]}f(x) - f(\tilde x) \le 2^{-g+2} \Delta.   
$
}

\noindent
\underline{In summary,} the following facts hold on the event $\mathcal{E}$ in Case 2.\\
(i) For any $y \in [I_-',1/2]$
\[f(y) - f(\tilde x) \leq \frac{3\Delta}{4}.
\]
{ (ii) If $\tilde x\le 1/2$ then we have 
  $\min_{x\in [I_-',1/2]}f(x) - f(\tilde x) \le 2^{-g+2} \Delta.$
}



\paragraph{Case 3:  $\mathcal G_- = \emptyset$.} In this case, we have $I_-' =0$ and, by the definition of $\mathcal G_-$,
$$\hat f_{0} - \hat f_{2^{-g}} \leq  c_{\mathrm{cond}}\frac{\Delta}{2g+1},
\quad \text{and} \quad \hat f_{x} - \hat f_{2x} \leq c_{\mathrm{cond}}\frac{\Delta}{2g+1}
$$
for all $x\in \mathcal G$ such that $I_-' \leq x<1/2$.
Hence, on the event $\mathcal{E}$ we have, similar to {Case 2}, cf.~\eqref{eq:lem}, that  for any $y \in [I_-',1/2] = [0,1/2]$,
\begin{align*}
 f(y) - f(1/2) &\leq  f(0) - f(1/2) = f(0) - f(2^{-g}) + \sum_{i=2}^g(f(2^{-i}) -f(2^{-i+1}))
 \nonumber \\
 &\le 3c_{\mathrm{cond}}\frac{ g\Delta }{2(2g+1)} \le 3c_{\mathrm{cond}}\frac{\Delta}{4}.
\end{align*}
By the same argument as in  Case 2, this implies \eqref{eq:case2} for any $y \in [I_-',1/2]=[0,1/2]$.

\vspace{1mm}

\noindent
\underline{In summary}, the following facts hold on the event $\mathcal{E}$ in Case 3.\\
(i) For any $y \in [I_-',1/2]$
\[f(y) - f(\tilde x) \leq   \frac{3\Delta}{4}.\]
(ii) {If $\tilde x\le 1/2$ we have, since $I_-' = 0$,
$\min_{x\in [I_-',1/2]}f(x) - f(\tilde x)=0.$
}

\vspace{2mm}

Putting together the conclusions obtained in the three cases
we deduce that  
under $\mathcal{E}$ the following two facts hold.

\noindent
{\bf Fact (a):}
\[\max_{y \in [I_-',1/2]}f(y) - f(\tilde x) \leq    \Big[\frac{3}{4} \lor \Big(1 - \frac{c_{\mathrm{cond}}}{2(2g+1)}\Big)\Big] \Delta = \Delta'.\]
{\bf Fact (b):} { \it If $\tilde x\le 1/2$ we have:}
\[\min_{x\in [I_-',I_+']}f(x) - f(\tilde x) \le \min_{x\in [I_-',1/2]}f(x) - f(\tilde x) \leq 2^{-g+2} \Delta.\]
From Fact (b) and Assumption$(\epsilon,\Delta,[I_-, I_+])$ we obtain : 
\begin{equation}\label{factb}
   \tilde x\le 1/2 \quad \Rightarrow \quad \min_{x\in [I_-',I_+']}f(x) - f^*  \leq \epsilon +  2^{-g+2} \Delta = \epsilon'. 
\end{equation}
An analogous argument dealing with the interval $[1/2,1]$ and $I_+'$ instead of the interval $[0,1/2]$ and $I_-'$ yields that
under $\mathcal{E}$ the next two facts hold.

\noindent
{\bf Fact (c):}
\[\max_{y\in [1/2,I_+']}f(y) - f(\tilde x) \leq    \Big[\frac{3}{4} \lor \Big(1 - \frac{c_{\mathrm{cond}}}{2(2g+1)}\Big)\Big] \Delta = \Delta'.\]
{\bf Fact (d):} 
\[\tilde x\ge 1/2 \quad \Rightarrow \quad \min_{x\in [I_-',I_+']}f(x) - f^*  \leq \epsilon +  2^{-g+2} \Delta = \epsilon'.\] 
Combining Facts (a) and (c) we get the first inequality of Lemma \ref{lem:splitting}. Combining~\eqref{factb} and Fact (d) we get the second inequality of Lemma \ref{lem:splitting}. $\square$

\paragraph{Acknowledgements.} 

The work of A. Carpentier was partially supported by the Deutsche Forschungsgemeinschaft (DFG) through SFB 1294 “Data Assimilation,” Project A03 (Project ID 318763901); the DFG Research Unit FOR 5381 “Mathematical Statistics in the Information Age—Statistical Efficiency and Computational Tractability,” Project TP 02 (Project ID 460867398); the Université franco-allemande (UFA) through the Collège doctoral franco-allemand CDFA-02-25 “Statistisches Lernen für komplexe stochastische Prozesse”; and the European Research Council (ERC) through the ERC Consolidator Grant SOCE (Grant No. 101229569). Views and opinions expressed are, however, those of the authors only and do not necessarily reflect those of the European Union or the European Research Council. Neither the European Union nor the granting authority can be held responsible for them.

The work of Chloé Rouyer was also partially supported by the Deutsche Forschungsgemeinschaft (DFG) through SFB 1294 “Data Assimilation,” Project A03 (Project ID 318763901).

The work of Alexandre B. Tsybakov was supported by Labex ECODEC (ANR-11-LABEX-0047) and ANR MaLIP (ANR-25-CE40-3228-01).

The work of Arya Akhavan was funded by UK Research and Innovation (UKRI) under the UK Government’s Horizon Europe funding guarantee (grant number EP/Y028333/1).
{
\bibliographystyle{alpha}

\bibliography{biblio}
}


\appendix

\section{Appendix}
\label{sec:appendix}

\subsection{Complement to the Bound on the Budget}
\label{appen:budget}
{\bf Evaluation of the sums in formula \eqref{eq:proof:5}.}
Note that if $\bar C$ is chosen large enough the map $m\mapsto \frac{(\log_2(Te^{-2m}))^5}{e^{-2m}}$ is increasing on the set of $m\ge 0$ such that $e^{-m} \geq \bar \Delta$. We have 
\begin{align}
\sum_{\substack{ m \ge 0:\\ e^{-m} \geq \bar \Delta}}  \frac{(\log_2(Te^{-2m}))^5}{e^{-2m}}
& = (\log_2(T))^5 + \sum_{\substack{ m \ge 1:\\ e^{-m} \geq \bar \Delta}}  \frac{(\log_2(Te^{-2m}))^5}{e^{-2m}}
\nonumber
\\
& \leq (\log_2(T))^5 + \int_{1}^{\log(1/\bar \Delta)+1}  \frac{(\log_2(Te^{-2u}))^5}{e^{-2u}} du
\nonumber
\\
& = (\log_2(T))^5 +    (T/2)\int_{e^{-2}\bar{C}^2 c_\delta}^{e^{-2}T} (\log_2 w)^5/w^2 dw \quad \text{(change of variable $w=Te^{-2u}$)}\nonumber
\\
& \leq (\log_2(T))^5 +    (T/2)\int_{e^{-2}\bar{C}^2 c_\delta}^{+\infty} (\log_2 w)^5/w^2 dw
\label{eq:proof:6a}
\end{align}
where $c_\delta=\log(1/\delta)(\log\log(1/\delta))^4\ge \log(3)(\log\log(3))^4$. We may also recall that 
$T> {\bar C}^2 c_\delta \ge \bar{C}^2 \log(3)(\log\log(3))^4$. Thus, for $\bar{C}$ large enough the inequality $C_2 (\log_2(T))^5\le T/4$ holds true. Furthermore, choosing $\bar C>0$ large enough makes the last integral in \eqref{eq:proof:6a} smaller than $(2C_2)^{-1}$. 
Combining these arguments, we get that 

\begin{align} \label{eq:proof:6}
C_2 \sum_{\substack{ m \ge 0:\\ e^{-m} \geq \bar \Delta}}  \frac{(\log_2(Te^{-2m}))^5}{e^{-2m}} \le \frac{T}{2}
\end{align}
whenever $\bar C>0$ is large enough. Quite analogously, for the first sum in \eqref{eq:proof:5} we have
\begin{align*}
\log(1/\delta)\hspace{-1mm}\sum_{\substack{ m \ge 0:\\ e^{-m} \geq \bar \Delta}}  \frac{(\log_2(Te^{-2m}))^4}{e^{-2m}}
\leq \log(1/\delta) \Big((\log_2(T))^4 +    \frac{T}{2}\int_{e^{-2}\bar{C}^2 c_\delta}^{+\infty}
\frac{(\log_2 w)^4}{w^2} dw\Big).
\end{align*}
There exist absolute constants $C_3, C_4>0$ such that 
\begin{align}
\log(1/\delta)\int_{e^{-2}\bar{C}^2 c_\delta}^{+\infty}
\frac{(\log_2 w)^4}{w^2} dw
&\le C_3 \log(1/\delta)\frac{(\log (\bar{C}^2 c_\delta))^4}{\bar{C}^2 c_\delta} 
\nonumber
\\
\label{eq:proof:9a}
&\le C_4 \frac{\log(1/\delta)(\log\log(1/\delta))^4}{\bar{C}^2 c_\delta} = \frac{C_4}{\bar{C}^2}.
\end{align}

Furthermore, since $T> {\bar C}^2 \log(1/\delta)(\log\log(1/\delta))^4$ we have that, for $\bar C>0$ large enough the map $T\mapsto \frac{(\log_2(T))^4}{T}$ is decreasing and
\begin{align}\label{eq:proof:9b}
\log(1/\delta) \frac{(\log_2(T))^4}{T} \le   C_5 \frac{(\log(\bar C)+ \log\log(1/\delta)+ \log\log\log(1/\delta))^4}{{\bar C}^2 (\log\log(1/\delta))^4},
\end{align}
where $C_5>0$ is an absolute constant.
By choosing $\bar C>0$ large enough the right hand sides of \eqref{eq:proof:9a} and \eqref{eq:proof:9b} can be rendered  smaller than $(4C_2)^{-1}$. 
Therefore, we conclude that
\begin{align} \label{eq:proof:7}
C_2 \log(1/\delta) \sum_{\substack{ m \ge 0:\\ e^{-m} \geq \bar \Delta}}  \frac{(\log_2(Te^{-2m}))^4}{e^{-2m}} \le \frac{T}{2}
\end{align}
if $\bar C>0$ is chosen large enough. Combining \eqref{eq:proof:5}, \eqref{eq:proof:6}, and \eqref{eq:proof:7} yields that $T^*\le T$ if $\bar C>0$ is chosen large enough.

\subsection{Analysis of the Expected Simple Regret}
\label{appen:expected_sr}

We now provide a proof for Corollary \ref{cor:expected_sr}.

\paragraph{Proof of Corollary \ref{cor:expected_sr}}
For any $f \in \mathcal F$, we can fix $\delta_*=e^{-e}<1/3$, and run Algorithm~\ref{alg:meta} with
$\delta=\delta_*$. Then
\[
    \bar\Delta
    =
    \bar C
    \sqrt{\frac{\log(1/\delta_*)}{T}}
    \bigl(\log\log(1/\delta_*)\bigr)^2
    =
    \frac{C_0}{\sqrt T},
\]
where $C_0>0$ is a numerical constant. Increasing $\bar C$, if necessary,
we may assume that $C_0$ is large enough for the budget bound in
Theorem~\ref{thm:main} to hold. Thus the algorithm uses at most $T$ oracle
calls. If $\bar\Delta>1$, the claim follows after increasing $C_{\rm exp}$,
since $0\le f(\widehat x)-f^*\le 1$. Hence assume $\bar\Delta\le 1$.

Let \(R\) be the last executed epoch, so that the calls of the Splitting
Algorithm are indexed by \(r=0,\ldots,R\), and the returned interval has index
\(R+1\). For every such $r$, since
$\Delta_r\ge \bar\Delta=C_0/\sqrt T$, choosing $C_0$ large enough gives
\[
    T\Delta_r^2\ge C_0^2\ge 2,
    \qquad
    \delta_r=\frac{\delta_*}{T\Delta_r^2}
    \le \frac{\delta_*}{C_0^2}<\frac12.
\]
Thus $g_r\ge 2$, and Lemma~\ref{lem:splitting} holds at each executed epoch. Write $I^{(r)} := [I_-^{(r)}, I_+^{(r)}]$ for the interval maintained by Algorithm~\ref{alg:meta} at epoch $r$. Let \(\mathcal E_r\) be the event that the call to Algorithm~\ref{alg:splitting}
at epoch \(r\), with input \((I^{(r)},\Delta_r,\delta_r)\), outputs an interval
\(I^{(r+1)}\) satisfying
Assumption\((\epsilon_{r+1},\Delta_{r+1},I^{(r+1)})\). For \(r=0,\ldots,R\), define
\[
    A_r=\bigcap_{s=0}^{r-1}\mathcal E_s,
    \qquad
    F_r=A_r\cap \mathcal E_r^c ,
\]
with \(A_0\) the sure event. Also set $ A_{R+1}=\bigcap_{s=0}^{R}\mathcal E_s$. On \(A_r\), the induction from the proof of Theorem~\ref{thm:main} gives
\[
    \min_{x\in I^{(r)}} f(x)-f^*\le \epsilon_r,
    \qquad
    \max_{x,y\in I^{(r)}}(f(x)-f(y))\le \Delta_r .
\]
Hence every \(x\in I^{(r)} \) satisfies
\[
    f(x)-f^*\le \epsilon_r+\Delta_r .
\]
Let \(\mathscr F_r\) denote the sigma-field generated by all oracle queries
and observations before epoch \(r\). On \(A_r\), the interval \(I^{(r)}\) is
\(\mathscr F_r\)-measurable and satisfies
Assumption\((\epsilon_r,\Delta_r,I^{(r)})\). Moreover, the oracle noises used
during epoch \(r\) are independent of \(\mathscr F_r\). Therefore, by
Lemma~\ref{lem:splitting}, on $A_r$ we have
\[
    \mathbb P(\mathcal E_r^c\mid \mathscr F_r)\le \delta_r.
\]
Consequently,
\[
    \mathbb P(\mathscr F_r)
    =
    \mathbb E\!\left[
        \mathbf 1_{A_r}
        \mathbb P(\mathcal E_r^c\mid \mathscr F_r)
    \right]
    \le \delta_r .
\]

Since the intervals are nested, on $\mathscr F_r$ the final output still belongs to
$I^{(r)}$. On $A_R$, the same induction gives the bound at scale $R$. Hence

\begin{align*}
    \mathbb E\!\left[f(\widehat x)-f^*\right]
    &\le
    \epsilon_{R+1}+\Delta_{R+1}
    +
    \sum_{r=0}^{R}\mathbb P(F_r)(\epsilon_r+\Delta_r)  
   \\ 
   & \le
    \epsilon_{R+1}+\Delta_{R+1}
    +
\sum_{r=0}^{R}\delta_r(\epsilon_r+\Delta_r). \numberthis \label{proof_exp:bound_exp_sr}
\end{align*}

It remains to bound the deterministic terms. By the stopping rule,
\begin{equation}
    \Delta_{R+1}<\bar\Delta=\frac{C_0}{\sqrt T}. \label{proof_exp:bound_DeltaR+1}
\end{equation}

Also,
\[
    \psi(\Delta)
    =
    2^{-\bar g(\Delta)+2}
    =
    2^{1-\lfloor \log_2(T\Delta^2)\rfloor}
    \le \frac{4}{T\Delta^2}.
\]
Therefore, for every \(r\le R+1\),
\[
    \epsilon_r
    =
    \sum_{s=0}^{r-1}\psi(\Delta_s)\Delta_s
    \le
    4\sum_{s=0}^{r-1}\frac{1}{T\Delta_s}.
\]
In order to bound the quantities in Equation \eqref{proof_exp:bound_exp_sr},  we consider the following two sums:

\[
    S_1=\sum_{r=0}^{R}\frac{1}{T\Delta_r},
    \qquad
    S_2=\sum_{r=0}^{R}\frac{1}{T\Delta_r^2}.
\]
We can bound the dependency on $\Delta_r$ by taking advantage of the block-counting estimate from Equation \eqref{eq:proof:3} in the proof of Theorem~\ref{thm:main}, which states that
\[
    \left|\{r\leq R:e^{-m-1}<\Delta_r\le e^{-m}\}\right|
    \le C_b\bigl(1+\log(Te^{-2m})\bigr)
\]
for every relevant $m\ge0$ and some constant $C_b > 0$. Since $\bar\Delta=C_0/\sqrt T$, summing over these blocks yields
\[
    S_1\le \frac{C}{\sqrt T},
    \qquad
    S_2\le C,
\]
for a numerical constant $C>0$. Thus
\begin{equation}
     \epsilon_{R+1}+\Delta_{R+1}
    \le
    4S_1+\bar\Delta
    \le
    \frac{4C + C_0}{\sqrt T}. \label{proof_exp:bound_DeltaR+1_epsR+1}
\end{equation}
    
Moreover to bound the sum in Equation \eqref{proof_exp:bound_exp_sr}, we have:
\[
    \sum_{r=0}^{R}\delta_r\Delta_r
    =
    \delta_* S_1
    \le
    \frac{C}{\sqrt T},
\]
and, since $\sup_{r\le R}\epsilon_r\le 4S_1$,
\begin{equation}
    \sum_{r=0}^{R}\delta_r\epsilon_r
    \le
    \Bigl(\sup_{r\le R}\epsilon_r\Bigr)
    \sum_{r=0}^{R}\delta_r
    \le
    S_1\delta_* S_2
    \le
    \frac{(4/3)C^2}{\sqrt T},
 \label{proof_exp:bound_sum}
\end{equation}
where the second inequality follows from the construction of $\delta_r$ and the third from $\delta^* \leq 1/3$.
Plugging in Equations \eqref{proof_exp:bound_DeltaR+1_epsR+1} and \eqref{proof_exp:bound_sum} in Equation \eqref{proof_exp:bound_exp_sr} with a large enough constant $C_{\rm exp}$ yields
\[
    \mathbb E\!\left[f(\widehat x)-f^*\right]
    \le
    \frac{C_{\rm exp}}{\sqrt T}.
\]
Taking the supremum over $f\in\mathcal F$ completes the proof. 
$\square$


\newpage

\end{document}